\font\sc=rsfs10 at 12pt
\numberwithin{equation}{section}
\renewcommand{\a}{\alpha}
\renewcommand{\b}{\beta}
\newcommand{\g}{\gamma}
\renewcommand{\d}{\delta}
\newcommand{\D}{\Delta}
\newcommand{\ve}{\varepsilon}
\renewcommand{\th}{\theta}
\renewcommand{\k}{\kappa}
\renewcommand{\l}{\lambda}
\newcommand{\x}{\xi}
\newcommand{\vf}{\varphi}
\newcommand{\h}{\chi}
\renewcommand{\o}{\omega}
\newcommand{\C}{{\mathbb C}}
\newcommand{\R}{{\mathbb R}}
\newcommand{\Z}{{\mathbb Z}}
\newcommand{\mbf}[1]{\protect\mbox{\boldmath$#1$\unboldmath}}
\newcommand{\Bc}{{\mathcal B}}
\newcommand{\Dc}{{\mathcal D}}
\newcommand{\Oc}{{\mathcal O}}
\newcommand{\Rc}{{\mathcal R}}
\newcommand{\Sc}{{\mathcal S}}
\newcommand{\pdf}[2]{\frac{\partial {#1}}{\partial {#2}}}
\DeclareMathOperator*{\slim}{s-lim} 
\DeclareMathOperator{\im}{{\rm Im}\,}
\DeclareMathOperator{\re}{{\rm Re}\,}
\DeclareMathOperator{\rank}{rank}
\newcommand{\Tr}{\operatorname{Tr\,}}
\newcommand{\tr}{\operatorname{tr\,}}
\newcommand{\supp}{\operatorname{supp\,}}
\newcommand{\spec}{\operatorname{spec\,}}
\newcommand{\smac}{\operatorname{spec}_{\operatorname{ac}}}
\newcommand{\sme}{\operatorname{spec}_{\operatorname{ess}}}
\newcommand{\ham}[1]{\mathbb{#1}} 
\newcommand{\ccs}{C_{0}^{\infty}} 
\newcommand{\Fr}{\sc\mbox{F}\hspace{0.1pt}} 
\newtheorem{theorem}{Theorem}[section]
\newtheorem{proposition}[theorem]{Proposition}
\newtheorem{lemma}[theorem]{Lemma}
\theoremstyle{definition}
\newtheorem{definition}[theorem]{Definition}
\theoremstyle{remark}
\author{J. Kungsman \\ 
             Department of Mathematics \\
             Uppsala University \\
             SE-751 06 Uppsala, Sweden
             \and 
             M. Melgaard \footnote{First author: M. Melgaard, Tel: +44 1273 678933, Fax:  +44 (0)1273 678097}  \\
             Department of Mathematics \\
             University of Sussex \\
             Brighton BN1 9QH, Great Britain}
\date{January 29, 2014}
\title{Poisson wave trace formula \\Êfor perturbed Dirac operators}
\theoremstyle{plain} 
\theoremstyle{plain} 
\begin{document}
\maketitle

\begin{abstract}
We consider self-adjoint Dirac operators $\ham{D}=\ham{D}_0 + V(x)$, where $\ham{D}_0$ is the free three-dimensional Dirac operator and $V(x)$ is 
a smooth compactly supported Hermitian matrix. We define resonances of $\ham{D}$ as poles of the meromorphic continuation of its cut-off resolvent. 
An upper bound on the number of resonances in disks, an estimate on the scattering determinant and the Lifshits-Krein trace formula then leads to a 
global Poisson wave trace formula for resonances of $\ham{D}$.    
\end{abstract}


\section{Introduction and main result}
\label{jkmm13b:intro} 
For suitable perturbations $L$ of the Laplacian $-\D $, on $L^2(\R ^n)$ with $n$ odd, the first Poisson wave trace formula (in the sense of distributions) of the form
\begin{equation}
2\Tr \big (\cos t\sqrt{L} - \cos t\sqrt{-\D}  \big ) = \sum e^{it\l _j}
\label{jkmm13b:intro-eq1} 
\end{equation}
where the sum extends over all resonances $\l _j$ of $L$, appeared in Lax and Phillips \cite{lax_phillips_78}. They proved the formula for obstacle scattering, 
but only for $t > 4R$ where the obstacle is located within a ball of radius $R$.  Bardos, Guillot and Ralston \cite{bardos_guillot_ralston} investigated the precise 
distribution of scattering poles associated with perturbations of the wave equation in odd dimensions and one of their main ingredients is an extension of the 
trace formula (\ref{jkmm13b:intro-eq1}), still based on Lax-Phillips theory \cite{lax_phillips_78}, but again only valid for comparatively large values of $t$, namely $t > 2R$.  
Later the Poisson formula (\ref{jkmm13b:intro-eq1}) was proved for all $t \neq 0$ by Melrose $\cite{melrose_82}$ in the case of compactly supported potentials and 
then further generalized by Sj\"{o}strand and Zworski \cite{sjo_zwo_94} to more general $L$.  

The main application of the Poisson trace formula has been to obtain lower bounds for the number of resonances in certain regions and to prove the existence of 
infinitely many resonances, see e.g. \cite{melrose_geometric_95}, \cite{barreto_zworski_96}, \cite{sjo_zwo_93} and \cite{sjo_zwo_94}. A survey of results related to 
trace formulas and resonances for Schr\"odinger type operators can be found in Burq \cite{burq98} and in the recent review by Hislop \cite{hislop_survey}.  

Zworski \cite{zworski_poisson_97} obtained a new proof of the trace formula that avoids the use of Lax-Phillips theory and instead is based on an estimate of the 
scattering determinant in $\C $.  This proof allowed Zworski to show the trace formula also in the even-dimensional case, see \cite{zworski_even_98}, and his proof 
also motivated the present work.

Proceeding to the relativistic setting, the only trace formula for Dirac operators involving resonances that we know of is that of Khochman \cite{khochman07}, where 
a local trace formula for resonances in the spirit of Sj\"{o}strand \cite{sjostrand_97} is established.

In the present work we consider Dirac operators $\ham{D} = \ham{D}_0 + V(x)$ where $\ham{D}_0$ is the free Dirac operator in three dimensions (see below) and 
$V$ is a smooth compactly supported matrix potential. We define resonances as poles of the meromorphic continuation of the cut-off resolvent. Then, following the 
strategy of Zworski \cite{zworski_even_98} (see also \cite{zworski_poisson_97}, \cite{guillope_zworski} and \cite{zworski_notes}) we establish an upper bound for the resonance 
counting function, estimate the scattering determinant and apply the Lifshits-Krein trace formula to obtain, in the distributional sense, the following Poisson wave trace formula
for the perturbed Dirac operator:

\begin{theorem}
Let $\Rc $ denote the set of resonances of $\ham{D}$ and let $m_j$ be the multiplicity of a resonance $\l _j$ (see Section~\ref{jkmm13b:diracreso} for precise definitions). 
Assume also that $\pm 1 \not \in \Rc $. Then, in the sense of distributions on $\R \setminus \{0\}$, 
\begin{align}
2 \Tr (\cos (t\ham{D}) - \cos (t \ham{D} _0)) &=  \sum _{\l _j \in  \Rc \cap \C _{+}} m_j(e^{-i|t|\overline{\l _j}} + e^{i|t|\l _j}) \nonumber \\
&\phantom{oooooooooo} - \sum _{\l _j \in  \Rc \cap \C _{-}} m_j (e^{-i|t|\l _j} + e^{i|t|\overline{\l _j}}) \nonumber \\
&\phantom{oooooooooo}+  \sum _{\l _j \in \spec _{\rm{d}}(\ham{D}) } 2 m_j \cos (t\l _j) . 
\end{align}
\label{jkmm13b:thm_trace_form}
\end{theorem}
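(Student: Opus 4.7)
The proof adapts the strategy of Zworski \cite{zworski_even_98, zworski_poisson_97} from Schrödinger to Dirac operators. The starting point is the Lifshits--Krein trace formula for the pair $(\ham{D}, \ham{D}_0)$: since the resolvent difference of sufficiently high power is trace class, there is a spectral shift function $\xi$ satisfying $\Tr(f(\ham{D}) - f(\ham{D}_0)) = \int_\R f'(\l)\xi(\l)\, d\l$ for admissible $f$. Taking $f(\l)=\cos(t\l)$, interpreted distributionally in $t \in \R \setminus \{0\}$ after smoothing the behaviour at infinity, yields
\begin{equation*}
2\Tr(\cos(t\ham{D}) - \cos(t\ham{D}_0)) = -2t \int_\R \xi(\l)\sin(t\l)\,d\l.
\end{equation*}
The distributional derivative $\xi'$ has point masses at each $\l_j \in \spec_{\rm d}(\ham{D})$ of weight $m_j$, contributing exactly the discrete-spectrum sum $\sum 2 m_j \cos(t\l_j)$ of the statement.

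On the essential spectrum $(-\infty,-1]\cup[1,\infty)$ I would use the Birman--Krein identity $\xi(\l) = -\frac{1}{2\pi i} \log\det S(\l)$, together with a Hadamard-type factorisation of $\det S$ obtained from the polynomial upper bound on the resonance counting function established earlier in the paper. This factorisation exhibits the elements of $\Rc$ as the zeros of the meromorphic continuation of $\det S$, so that $\log\det S$ becomes a convergent sum over resonances plus a polynomial remainder whose order is controlled by the prior estimate on $\det S$ in $\C$.

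Writing $\sin(t\l) = (e^{it\l} - e^{-it\l})/(2i)$ and substituting the factorisation, I would then deform the two resulting contour integrals along $(-\infty,-1]\cup[1,\infty)$: for $t>0$, push the $e^{it\l}$-piece into $\C_+$ and the $e^{-it\l}$-piece into $\C_-$, collecting residues at the resonances lying in the corresponding half-plane; for $t<0$ exchange the two deformations. The bound on $\det S$ ensures that the integrals along the shifted contours and along the arcs at infinity contribute nothing in the distributional pairing, so only the sums of residues survive, producing the two resonance sums in the statement after grouping by half-plane and noting that the sign of $t$ is encoded by the $|t|$ in the final exponentials.

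The main technical obstacle is the justification of the contour deformation near the thresholds $\l=\pm 1$, where the Riemann surface of the meromorphic continuation has branch points. The hypothesis $\pm 1 \notin \Rc$ is precisely what guarantees that $\det S$ is regular at the thresholds, so that the contour can be routed continuously around them without picking up spurious contributions. A further bookkeeping point is that the essential spectrum consists of two half-lines, requiring analysis on both branches of the associated Riemann surface; the evenness of $\cos$ in $\l$ together with the reflection symmetry $\overline{\Rc}=\Rc$ of the resonance set (inherited from self-adjointness of $\ham{D}$) then permits the two partial sums to be reassembled into the symmetric form displayed in the theorem.
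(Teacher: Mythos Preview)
Your overall architecture matches the paper's: Lifshits--Krein plus Birman--Krein to express the trace in terms of $\log s(\l)$, then the Hadamard/Weierstrass factorisation $s(\l)=e^{g(\l)}P_{\overline{\Rc}}(\l)/P_{\Rc}(\l)$, then contour deformation to collect residues. But you are missing the one technical device that makes all three steps rigorous, and without it the argument as written does not close.

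The paper does \emph{not} insert $f(\l)=\cos(t\l)$ directly; that function is not admissible in the Lifshits--Krein formula. Instead it pairs $u(t)$ with $\varphi\in C_c^\infty(\R\setminus\{0\})$, splits $\varphi=\varphi_++\varphi_-$ by sign of $t$, and applies the trace formula to $f_\pm(\l)=\widehat{\varphi_\pm}(\l)+\widehat{\varphi_\pm}(-\l)$. The crucial move is then to set $h_\pm=\Fr[\varphi_\pm/t^{9}]$, so that $\partial_\l^{9}h_\pm=\widehat{\varphi_\pm}$, and integrate by parts nine times against $\partial_\l\log s(\l)$. This converts the integrand to $h_\pm(\l)\,\partial_\l^{10}\log s(\l)$. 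Three things happen simultaneously: (i) since Lemma~\ref{jkmm13b:g_is_polynomial} shows $g$ is a polynomial of degree at most $9$, $\partial_\l^{10}g=0$ and the ``polynomial remainder'' is eliminated outright, not merely controlled; (ii) the remaining $\partial_\l^{10}$ of the Weierstrass products gives absolutely convergent sums $\sum m_j(\l-\overline{\l_j})^{-10}$ and $\sum m_j(\l-\l_j)^{-10}$, thanks to $N(r)\le Cr^3$; (iii) since $\varphi_+$ has support in $(0,\infty)$, $h_+(\l)=\Oc(\langle\l\rangle^{-\infty})$ for $\im\l\le 0$, which is what actually justifies pushing the contour into the lower half-plane and dropping the arcs at infinity. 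Division by $t^9$ is also precisely why the result is only claimed on $\R\setminus\{0\}$.

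Your version treats the polynomial remainder as something whose ``order is controlled'' and then disappears; it does not disappear on its own, and your contour deformation of the bare $e^{\pm it\l}$ factors cannot be justified by the bound on $s(\l)$ alone (that bound is of order $e^{C|\l|^{9+3\varepsilon}}$ away from small disks, which overwhelms any polynomial decay). Finally, the hypothesis $\pm 1\notin\Rc$ in the paper is used to validate the trace formula \eqref{jkmm13b:lifshits_krein_trace_formula_scattering_phase} on the full line, not to route the contour around branch points as you suggest.
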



We denote various positive constants for which the exact numerical values are of no importance by $C$. These constants may change from line to line without this being indicated.


\section{Preliminaries}
\label{jkmm13b:prelim}

\subsection{The Dirac operator}
\label{jkmm13b:diracope} 
To discuss perturbed Dirac operators we begin by considering the free, or unperturbed, Dirac operator. The free Dirac operator, describing the motion of a relativistic electron or 
positron without external forces, is the unique self-adjoint extension of the symmetric operator 
$$
\ham{D}_0 = -i \sum _{j=1}^3 \a _j \partial _j + \b = -i\mbf{\a }\cdot \nabla + \b  , \qquad \partial _j := \pdf{}{x_j}, \; \mbf{\a } = (\a _1, \a _2, \a _3),
$$
defined on $\ccs (\R ^3;\C ^4)$ in the Hilbert space $L^{2}(\R ^3 ; \C ^4)$. Here the $\a _j$ are symmetric $4\times 4$ matrices satisfying the anti-commutation relations
$$
\a _j \a _k + \a _k \a _j = 2\d _{jk}I_4, \quad j,k = 1,2,3,4, 
$$ 
with $\a _4 = \b )$ and with $I_4$ denoting the $4\times 4$ identity matrix.  The extension, which we also denote by $\ham{D}_0$, acts on the Hilbert space 
$L^2(\R^3 ; \C ^4)$ equipped with the inner product
$$
\langle u, v \rangle _{L^2(\R ^3 ; \C ^4)} = \sum _{j=1}^4 \int \limits _{\R ^3} u_j(x) \overline{v_j(x)}\,dx \quad \text{where }u=	(u_j)_{1\le j\le 4}, \; v = (v_j)_{1\le j\le 4}
$$
and it has domain $H^1(\R ^3 ; \C ^4)$; the Sobolev space of order one. When there is no risk of confusion we sometimes just write $L^2$ and $H^1$, respectively. 
It is well-known (see, e.g.,  Thaller \cite{thaller}) that the spectrum of $\ham{D}_0$ is purely absolutely continuous, viz. 
$$
\spec(\ham{D}_0) = \smac(\ham{D}_0) = (-\infty , -1] \cup [1, \infty ).
$$
On the resolvent set $\C \setminus \spec (\ham{D}_0)$ we denote the free resolvent $(\ham{D}_0  - \l )^{-1}$ by $R_0(\l )$. As usual the Fourier transform is defined by 
$$
(\Fr u) (\xi )=(2\pi )^{-3/2} \int \limits _{\R ^3 } e^{-ix\cdot \xi } u(x) \, dx. 
$$
The (principal) symbol of the free Dirac operator $\ham{D}_{0}$ is given by 
$$
\mbf{d}_0(\xi ) =\Fr \ham{D}_0 \Fr^{\ast} = \sum _{j=1}^3 \a _j \x _j + \b 
$$
and it has two doubly degenerate eigenvalues $\pm \sqrt{\x ^2 + 1} =: \pm \langle \x \rangle$. 
The corresponding orthogonal projections onto the eigenspaces are given by 
\begin{align}
	\Pi _{\pm } (\xi ) = \frac{1}{2} \big (I_4 \pm \langle \xi \rangle ^{-1}\mbf{d}_0(\xi ) \big ).
	\label{jkmm13b:projections_eigenspaces_symbol}
\end{align}
We are going to consider perturbations of $\ham{D}_0$ by smooth compactly supported Hermitian $4\times 4$ matrix potentials 
$V \in \ccs (\R ^3) \otimes M_4(\C )$; $M_{4}(\C)$ being the set of $4 \times 4$ matrices over $\C$, equipped with the operator norm, 
designated by $\| \cdot \|_{4 \times 4}$. The resulting self-adjoint operator $\ham{D} = \ham{D}_0 + V$ (defined via Kato-Rellich's theorem) 
has domain $H^{1}(\R^{3}; \C^{4})$ and, according to Weyl's theorem,  $\sme(\ham{D}) = (-\infty , -1] \cup [1, \infty )$ but, in addition, 
$\ham{D}$ can have finitely many eigenvalues of finite multiplicity in $(-1,1)$ (see, e.g.,  \cite[Theorem 4.23]{thaller}). It is well-known 
that under our assumptions on $V$ there are no eigenvalues $\l $ with $|\l |>1$ embedded in the continuous spectrum 
(see, e.g., \cite{berthier_georgescu}). 
%
%

When $A:L ^2(\R ^3; \C ^4) \to L ^2(\R ^3; \C ^4)$ is a compact operator the eigenvalues of $(A^\ast A)^{1/2}$, indexed in non-increasing order, 
are called the singular values of $A$ and are denoted by $s_j(A)$. The following inequalities are well-known (see e.g. \cite{simon_trace}):
\begin{align}
s_j(AB)&\le \|B \| s_j(A), \quad A\in \Bc _\infty , B\in \Bc , \label{jkmm13b:bounded_compact_singular_values} \\
s_{j+k-1}(A+B) &\le  s_j(A) + s_k(B) , \quad A,B \in \Bc _\infty , \label{jkmm13b:ky_fan_sum} \\ 
s_{j+k-1}(AB) &\le  s_j(A)s_k(B), \quad A, B \in \Bc _\infty , \label{jkmm13b:ky_fan_product}
\end{align}
where $\Bc $ and $\Bc _\infty $ denote the spaces of bounded and compact operators on $L^2(\R ^3; \C ^3)$, respectively. 
As a consequence of Weyl's inequality we also have 
\begin{align}
|\det (I - A)| \le \prod _{j=1}^\infty \big (1 + s_j(A) \big) \quad \text{for }A\in \Bc _1,
\label{jkmm13b:weyl_inequality}
\end{align}
where $\Bc _1$ is the set of trace class operators. 
%
%

\subsection{Resonances}
\label{jkmm13b:diracreso} 
By virtue of \eqref{jkmm13b:distribution_kernel_free_resolvent} the free cut-off resolvent $\h R_0(\l ) \h $, $\h \in \ccs (\R ^3)$, is an integral operator with kernel given by  
\begin{multline*}
	\h (x)R_0(\l ,x, y) \h (y) \\
	= \h (x) \Big (i \frac{\mbf{\a }\cdot (x-y)}{|x-y|^2} + \k (\l )\frac{\mbf{\a }\cdot (x-y)}{|x-y|} + \b + \l \Big )\frac	{e^{i\k (\l )(x-y)}}{4\pi |x-y|}\h (y),
\end{multline*}
for $\k (\l ) := \sqrt{\l ^2 -1}$, on the branch with $\im \sqrt {\l ^2 -1} > 0$.
Thus $\h R_0(\l ) \h : L ^2(\R ^3; \C ^4) \to L^2 (\R ^3; \C ^4)$ has a holomorphic extension from
$$
	\{\re \l \le 1, \im \l <0  \}\cup \{\re \l \ge -1, \im \l >0  \}
$$
across $(-\infty , -1]\cup [1,\infty )$ to the sheet with $\im \sqrt{\l ^2 - 1}<0$. 

We next consider the full resolvent $R_V(\l ) := (\ham{D} - \l )^{-1}$ for $\h \in \ccs (\R ^3)$ such that $\h V =V$. If we take $|\im \l | $ so large that $\| V R_0(\l ) \h \| \le C|\im \l |^{-1}\le 1/2$ we may write  
$$
	R_V(\l ) = R_0(\l ) (I + V R_0(\l ))^{-1}.
$$
Notice that for such $\l $ we have 
$$
	(I - VR_0(\l ) (1-\h )) (I + VR_0(\l )) = I + VR_0(\l ) \h , 	
$$ 
since $(1- \h )V =0$ and, consequently,
\begin{align}
R_V(\l ) = R_0 (\l ) \big (I + VR_0(\l ) \h  \big )^{-1} \big (I - VR_0(\l ) (1 - \h ) \big ).
\label{jkmm13b:full_resolvent_perturbation_free_resolvent}
\end{align} 
Since $VR_0(\l ) \h :L^2(\R ^3; \C ^4) \to H^1 (\supp V ; \C ^4)$, we infer that $VR_0(\l ) \h$ is compact on $L^2(\R ^3; \C ^4)$  by the 
Rellich--Kondrachov theorem and, moreover,  it depends holomorphically on $\l $ since $R_0(\l )$ does. Thus $(I + VR_0(\l ) \h )^{-1}$ 
has a meromorphic extension by the analytic Fredholm theorem. Furthermore, from \eqref{jkmm13b:full_resolvent_perturbation_free_resolvent} 
we get for $\h _0 \in \ccs (\R ^3)$ with $\h _0 V = V$ and $\h _0 \h = \h _0$ that
\begin{align}
\h _0 R_V (\l ) \h _0 = \h _0 R_0 (\l ) \h _0 \big (I + VR_0(\l ) \h  \big )^{-1} \big (I - VR_0(\l ) (1 - \h ) \big )
\label{jkmm13b:cutoff_resolvent_rewrite}
\end{align}
since initially $(I + VR_0(\l ) \h )^{-1}\h _0  = \h _0 (I + VR_0(\l ) \h )^{-1}$ holds for $|\im \l | \gg 1$ in the physical sheet by considering 
$(I + VR_0(\l ) \h )^{-1}$ as a Neumann series and remains true because both sides have meromorphic extensions. This provides the 
meromorphic extension of $\h _0 R_V(\l ) \h _0$ for which the poles are those of $(I + VR_0(\l ) \h )^{-1}$. These poles will be referred 
to as resonances of $\ham{D}$ and the set of all resonances of $\ham{D}$ will be denoted by $\Rc $. 
\begin{definition}
Assume $\l _j \in \Rc $ and let $\g _{\l _j}$ be the circle $\l _j + \ve _je^{i[0,2\pi ]}$ where $\ve _j$ is chosen sufficiently small so that $\g _{\l _j}$ 
encircles no other resonances but $\l _j$. The multiplicity of $\l _j$ is then given by 
\begin{align}
m_j = \rank \int \limits _{\g _j} R(\l ) \, d \l .
\label{jkmm13b:def_multiplicity_resonance}
\end{align}
\end{definition}


The proof of the following upper bound, which goes back to Melrose \cite{melrose83} in the Schr\"{o}dinger case, for the number of resonances in disks 
follows a by now standard procedure 
(see,  e.g.,  \cite{hislop_survey}, \cite[Proposition~6.2]{jkmm12a}, \cite{zworski_sharp}, \cite{vodev_sharp}, and references therein). 

\begin{proposition}
The following upper bound holds true: 
\begin{align}
N(r) :=  \# \{ \l \in \Rc : |\l | \le r \} \le Cr^3, 
\label{jkmm13b:resonance_counting_function}
\end{align}
where the number of resonances are counted according to their multiplicities. 
\label{jkmm13b:upper_bound_resonances}
\end{proposition}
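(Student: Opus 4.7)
The strategy follows the by-now standard scheme pioneered by Melrose and refined by Zworski and Vodev (cf.\ \cite{zworski_sharp, vodev_sharp, hislop_survey}): realize resonances as zeros of an entire function of a uniformizing variable, bound its growth via singular-value estimates, and count zeros by a Jensen-type formula tailored to the asymmetric growth on the two sheets.

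Fix $\h \in \ccs(\R^3)$ with $\h V = V$, and let $\rho>0$ satisfy $\supp\h \cup \supp V \subset B_\rho$. Set $\k = \sqrt{\l^2-1}$, so $\k \mapsto \l(\k) = \sqrt{\k^2+1}$ uniformizes the two-sheeted Riemann surface, the physical sheet corresponding to $\im\k>0$. From the explicit kernel recalled above, $K(\k) := V R_0(\l(\k))\h$ extends to an operator-valued entire function on $\C$, and by \eqref{jkmm13b:cutoff_resolvent_rewrite} the resonances of $\ham{D}$ coincide --- with multiplicities as in \eqref{jkmm13b:def_multiplicity_resonance} --- with the $\k\in\C$ at which $I+K(\k)$ fails to be invertible. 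Since $|\l|$ and $|\k|$ agree up to a constant for large arguments, it suffices to bound the number of such $\k$ in $|\k|\le Cr$. Write $\mu(\k) := \max(-\im\k,0)$.

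The key ingredient is a singular-value bound. Using the first-order identity $\ham{D}_0 R_0(\l) = I + \l R_0(\l)$, the bound $|e^{i\k|x-y|}|\le e^{2\rho\mu(\k)}$ on the support, and standard resolvent estimates, one verifies
\[
\|K(\k)\|_{L^2(\R^3;\C^4) \to H^1(B_\rho;\C^4)} \le C(1+|\k|)\, e^{2\rho\mu(\k)}.
\]
Combined with the Weyl asymptotics $s_j(H^1(B_\rho)\hookrightarrow L^2(B_\rho)) \le Cj^{-1/3}$ and \eqref{jkmm13b:bounded_compact_singular_values}, this yields
\[
s_j(K(\k)) \le C(1+|\k|)\, e^{2\rho\mu(\k)}\, j^{-1/3},
\]
so $K(\k)\in\Bc_p$ for every integer $p>3$. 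Take $p=4$ and define $h(\k) := \det(I + K(\k)^p)$, an entire function whose zero set contains all resonances with at least the required multiplicity. Applying \eqref{jkmm13b:ky_fan_product} (to obtain $s_j(K^p)\le C s_{\lceil j/p\rceil}(K)^p$) together with \eqref{jkmm13b:weyl_inequality}, and then splitting the series $\sum_j\log(1+s_j(K^p))$ at the threshold $j_0 \simeq (1+|\k|)^3 e^{6\rho\mu(\k)}$ where the summands transit from $\Theta(1)$ to $\Theta(j^{-p/3})$ --- the $\log$-contributions from the two regimes cancelling --- one arrives at
\[
\log|h(\k)| \le C(1+|\k|)^3\, e^{6\rho\mu(\k)}.
\]

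It remains to count the zeros of $h$ in $|\k|\le r$. On the physical sheet $\im\k\ge 0$ one has $\mu(\k)=0$, so the estimate is polynomial, $\log|h(\k)|\le C(1+|\k|)^3$, and $h$ admits there only the finitely many zeros coming from eigenvalues of $\ham{D}$ in $(-1,1)$. A na\"{i}ve Jensen estimate on the full disk $\{|\k|\le 2r\}$ produces only $N(r)\lesssim r^3 e^{Cr}$; the refinement to $Cr^3$ is supplied by a half-plane Jensen / Carleman-type argument that uses the polynomial bound on the physical sheet to absorb the exponential growth in the lower half-plane --- precisely the complex-analytic lemma underlying \cite{zworski_sharp, vodev_sharp}. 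This final step is the principal technical obstacle; the preceding parts are routine adaptations of the Schr\"{o}dinger-type proof, the only essential modification being the exponent $j^{-1/3}$ (in place of $j^{-2/3}$) in the singular-value bound, reflecting the first-order nature of $\ham{D}_0$.
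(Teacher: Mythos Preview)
Your outline is correct up to and including the bound $\log|h(\k)| \le C(1+|\k|)^3 e^{6\rho\mu(\k)}$, but the final step is a genuine gap. No standard ``half-plane Jensen / Carleman-type'' lemma converts this doubly-exponential bound on the unphysical sheet, together with a polynomial bound on the physical sheet and finitely many zeros there, into $N(r) \le Cr^3$; applying Carleman's formula in the lower half-plane, for instance, feeds the bad bound straight back into the zero count. In particular, neither \cite{zworski_sharp} nor \cite{vodev_sharp} proceeds this way: the sharp $r^n$ bound in those works is obtained by first establishing a \emph{global} estimate $|h(\k)| \le e^{C|\k|^n}$ valid on both sheets, after which the ordinary Jensen formula on a disk of radius $2r$ suffices. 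You have correctly identified that passing from the crude bound to $Cr^3$ is ``the principal technical obstacle'', but you have misidentified the tool that removes it.

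The mechanism producing the global bound --- and the idea you are missing --- is the decomposition $\h(R_0(\l) - \widetilde R_0(\l))\h = 2\pi i E_\h(\overline\l)^* E_\h(\l)$, which writes the extended cut-off resolvent as the physical-sheet resolvent plus an operator factoring through $L^2(S^2;\C^4)$. The physical-sheet piece contributes singular values $\le C\langle\l\rangle j^{-1/3}$ with no exponential factor (your estimate, with $\mu=0$); the $S^2$ piece, after smoothing by powers of $(I-\D_\o)^{-1}$ on the two-dimensional sphere, obeys $s_j(E_\h(\l)) \le C e^{C|\l|-j^{1/2}/C}$, so the exponential growth in $|\l|$ is compensated by super-polynomial decay in $j$. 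Combining the two pieces via \eqref{jkmm13b:ky_fan_sum}--\eqref{jkmm13b:ky_fan_product} and splitting the Weyl product \eqref{jkmm13b:weyl_inequality} at $j \sim |\l|^2$ yields $|f(\l)| \le e^{C|\l|^3}$ uniformly on both sheets, whence Jensen gives $N(r) \le Cr^3$ directly. This is precisely the paper's argument; the complex-analytic black box you invoke is neither needed nor, in the generality stated, available in the literature.
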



\begin{proof}
Recall, by \eqref{jkmm13b:full_resolvent_perturbation_free_resolvent}, that the resonances of $\ham{D}$ can be characterized as poles of 
$(I + VR_0 (\l ) \h )^{-1}$. Since 
$$
I - (VR_0(\l ) \h )^4 = (I + VR_0 (\l ) \h ) \big (I - VR_0 (\l ) \h  + \cdots - (VR_0 (\l ) \h )^3 \big ) 
$$
and because (see below) $(VR_0(\l ) \h )^4\in \Bc _1$, the resonances will appear among the zeros of 
$$
f(\l ) = \det \big (I - (VR_0(\l ) \h )^4 \big ). 
$$
We first estimate $|f(\l )|$ by using the Weyl inequality \eqref{jkmm13b:weyl_inequality}:
\begin{align}
|f(\l )| \le \prod _{j=1} ^\infty \Big (1 + s_j \big ((VR_0(\l ) \h )^4 \big ) \Big ). 
\label{jkmm13b:weyl_inequality_applied}
\end{align}
In view of Ky Fan's inequality \eqref{jkmm13b:ky_fan_product} and the inequality $s_j(VR_0(\l ) \h ) \le \|V \| _\infty s_j(\h  R_0(\l ) \h)$ 
it suffices to estimate the singular values of $\h  R_0(\l ) \h$. This can be done by comparing them 
to the singular values of the resolvent of a free Dirac operator on a sufficiently large flat torus $\ham{T} = (\R / R \Z )^3$ with 
$\supp (\h  )\subset B(0,R)$:
$$
s_j(\h  R_0(\l ) \h) \le s_j \big ((\ham{D}_{\ham{T},0} - i)^{-1} \big ) \| \h  R_0(\l ) \h \| _{L ^2 \to H ^1}.
$$ 
It is well-known (see e.g. \cite{jkmm12a}) that $s_j((\ham{D}_{\ham{T},0} - i)^{-1}) \le Cj^{-1/3}$. Moreover, on the branch of the square root 
where $\im (\k (\l )) > 0$ we obtain from \eqref{jkmm13b:dirac_resolvent_laplace_resolvent} and 
\eqref{jkmm13b:laplace_resolvent_bound_sobolev} that $\| \h  R_0(\l ) \h \| _{L^2 \to H^1} \le C \langle \l \rangle $. 

To estimate the singular values of the extended resolvent we use 
$$
\h \big (R_0(\l  ) - \widetilde{R}_0(\l )  \big ) \h = 2\pi i E_{\h } (\overline{\l }) ^\ast E_{\h}(\l ),
$$
(see \eqref{jkmm13b:def_E_chi} and \eqref{jkmm13b:extended_resolvent_relation}), where we temporarily denote the extended resolvent by 
$\widetilde{R}_0(\l )$. Therefore, it suffices to estimate 
$$
s_j(E_{\h} (\overline{\l }) ^\ast E _{\h}(\l )) \le \| E _{\h}(\l ) \|_{L^2\to L^2(S^2)} s_j(E_{\h} (\l )). 
$$
To this end, denote by $\D _\o $ the Laplace-Beltrami operator on $S ^2$. Then   
$$
  s_j(E_{\h}(\l )) \le s_j\big ( (I - \D _{\o })^{-k} \big ) \|(I - \D _{\o })^{k}E_{\h}(\l )  \|_{L ^2 (\R ^3) \to L ^2 (S^2)}.
$$
where 
$$
  s_j\big ( (I - \D _{\o })^{-k} \big ) \le C^k j^{-k}
$$
is well-known, and
\begin{align*}
  \|(I - \D _{\o })^{k}E_{\h}(\l )  \|_{L ^2(\R ^3) \to L ^2(S^2)} &\le (2k)! e^{C |\l |}.
\end{align*}
Hence, for $k=[j^{1/2}/(2C)]+1$,  
$$
s_j(E_{\h}(\l ) ) \le j^{-k}C^k (2k)! e^{C |\l |} \le C_1 e^{C_1 |\l | - j^{1/2}/C_1},
$$
by Stirling's formula.  Since $\| \ham {E}_{\h } (\l ) \| _{L^2 \to L^2(S^2)} \le Ce^{C|\l |}$ we now get 
$$
s_j \big ((VR_0(\l ) \h )^4 \big ) \le C_2\exp{(C_2 |\l | - j^{-1/2}/C_2)} + C_2 j^{-4/3}, 
$$
so that, especially, 
$$
s_j \big ((VR_0(\l ) \h )^4 \big ) \le \begin{cases} C_3 \exp{(C_3|\l |)} \quad &\text{for } j\le C_3|\l |^2  , \\
C_3 j^{-4/3} \quad &\text{for } j\ge C_3 |\l |^2. \end{cases}
$$
It follows from \eqref{jkmm13b:weyl_inequality} that 
\begin{align}
|f(\l )| &\le \prod _{j\le C_3 |\l |^2}\Big (1 + C_3 \exp {(C_3|\l |)} \Big ) \Big ( \exp {(\sum _{j\ge C_3 |\l|^2} C_3 j^{-4/3} ) } \Big ) \\
&\le \exp {({C_4 |\l |^3})}.
\label{jkmm13b:determinant_idpluspowercutoffresolvent_estimate}
\end{align}
The result now follows by an application of Jensen's formula
\begin{equation*}
\log(2)N(r) \le \frac{1}{2\pi } \int \limits _0 ^{2\pi } \log |f(2re^{i\th })| \, d \th - \log |f(0)|. \qedhere 
\end{equation*}
\end{proof}
%
%
%


\section{Proof of the Poisson trace formula}
\label{jkmm13b:provetrace}
Let $s(\l)$ be the determinant of the scattering matrix, as introduced in Appendix A. Meromorphic extension of the identity 
$S(\l )^{-1} = S(\l ) ^\ast $ for $\l \in \spec (\ham{D}_0)$ implies $S(\l )^{-1} = S(\overline{\l }) ^\ast $ and,  therefore,  the scattering determinant 
satisfies
$$
	\frac{1}{s(\l )} = \overline{s(\overline{\l })}. 
$$ 
It follows that if $z_j\in \Rc $ then $s(\overline{z_j}) = 0$ and vice versa.   
From the Weierstrass factorization theorem it follows that
\begin{align}
s(\l ) = e^{g(\l )} \frac{P_{\overline{\Rc  }}(\l )}{P_{\Rc }(\l )},
\label{jkmm13b:weierstrass_factorization}
\end{align}
where $P_{\Rc }$ is the canonical product
\begin{align}
P_\Rc (\l ) = \prod _{z_j \in \Rc } E_3\Big (\frac{\l }{z_j} \Big )^{m_j}, \quad E_p (z ) = (1-z) \exp {\Big ( z + \frac{z^2}{2} + \cdots + \frac{z^p}{p}} \Big )
\label{jkmm13b:canonical_product}
\end{align}
and $g$ an entire function. 
We choose the genus to be $p=3$ so the infinite product converges (see e.g. \cite{conway}). 
Moreover,  it follows from  \cite[Appendix~D.2, equation (D.2.2)]{zworski_notes}
together with Proposiion~\ref{jkmm13b:upper_bound_resonances} that
\begin{align}
|P_\Rc (\l )| \le C e^{C|\l |^4}.
\label{jkmm13b:weierstrass_product_upper_bound}
\end{align} 
\begin{lemma}
For any constant $C>0$ there exists a constant $C_0$ such that the inequality
$$
\prod _{j=1}^\infty (1 + Ce^{C|\l |^N - \sqrt{j}/C}) \le C_0e^{C_0|\l | ^{3N}}
$$
holds for all $\l \in \R $.
\label{jkmm13b:infinite_product_j_estimate}
\end{lemma}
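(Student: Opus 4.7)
The plan is to split the infinite product at the index where the exponent $C|\l|^N - \sqrt{j}/C$ changes sign, namely at $j_0 \sim C^4|\l|^{2N}$. For small $j$ the factors can be large but they are few, while for large $j$ the factors are very close to $1$, so the tail product behaves like the exponential of a small sum.

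More precisely, fix $j_0 = \lfloor C^4 |\l|^{2N} \rfloor + 1$, so that $\sqrt{j}/C \ge C|\l|^N$ whenever $j \ge j_0$. For the finite part $j < j_0$, I would use the crude monotonicity bound
$$
1 + Ce^{C|\l|^N - \sqrt{j}/C} \le 1 + Ce^{C|\l|^N},
$$
and then
$$
\prod_{j < j_0}\bigl(1 + Ce^{C|\l|^N - \sqrt{j}/C}\bigr) \le \bigl(1 + Ce^{C|\l|^N}\bigr)^{j_0} \le \exp\bigl(j_0 \log(1+Ce^{C|\l|^N})\bigr).
$$
For $|\l|$ away from $0$ we have $\log(1 + Ce^{C|\l|^N}) \le C'|\l|^N$, hence the right-hand side is bounded by $\exp(C^4|\l|^{2N}\cdot C'|\l|^N + C'|\l|^N) \le \exp(C''|\l|^{3N})$, which is the desired order.

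For the tail $j \ge j_0$ I would use $1+x \le e^x$ to obtain
$$
\prod_{j \ge j_0}\bigl(1 + Ce^{C|\l|^N - \sqrt{j}/C}\bigr) \le \exp\!\left( Ce^{C|\l|^N}\!\! \sum_{j\ge j_0} e^{-\sqrt{j}/C}\right),
$$
and estimate the sum by the integral $\int_{j_0-1}^\infty e^{-\sqrt{t}/C}\,dt$. The substitution $s = \sqrt{t}/C$ reduces this to $2C^2\int_{\sqrt{j_0-1}/C}^\infty s e^{-s}\,ds$, which is bounded by $C_1(|\l|^N+1)e^{-C|\l|^N}$ by the choice of $j_0$. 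Multiplying by $Ce^{C|\l|^N}$ cancels the exponential and leaves a polynomial factor of order $|\l|^N$, so the tail product is dominated by $\exp(C_2|\l|^N) \le \exp(C_2|\l|^{3N})$ for $|\l| \ge 1$.

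Combining the two bounds gives the desired estimate for $|\l|\ge 1$; for $|\l|\le 1$ the left-hand side is trivially bounded by an absolute constant (since each factor is then bounded by $1 + Ce^{C - \sqrt{j}/C}$ and the product converges), so enlarging $C_0$ handles the remaining range. The one place that requires a bit of care is the tail estimate, where one must verify that the choice of $j_0$ makes the factor $e^{C|\l|^N}$ cancel against $e^{-\sqrt{j_0}/C}$ up to polynomial corrections; apart from that, the argument is a routine splitting-and-integral-comparison computation.
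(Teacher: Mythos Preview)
Your argument is correct. Both your proof and the paper's hinge on the same threshold $j\sim C^{4}|\l|^{2N}$ (equivalently $\sqrt{j}/C\sim C|\l|^{N}$), but the execution differs. The paper takes logarithms, passes to the integral $\int_0^\infty \log(1+Ce^{C|\l|^N-\sqrt{x}/C})\,dx$, substitutes $y=\sqrt{x}/C$ and integrates by parts once to obtain $C^2\int_0^\infty y^2\,\frac{Ce^{C|\l|^N}}{e^y+Ce^{C|\l|^N}}\,dy$, which it then splits at $y=2(\log C+C|\l|^N)$. You instead split the product itself at $j_0=\lfloor C^4|\l|^{2N}\rfloor+1$, bound the finite head crudely by $(1+Ce^{C|\l|^N})^{j_0}$, and control the tail via $1+x\le e^x$ together with $\int_{j_0-1}^\infty e^{-\sqrt{t}/C}\,dt=2C^2(a+1)e^{-a}$, $a=\sqrt{j_0-1}/C$. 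Your route avoids the integration by parts and is arguably more elementary; the paper's route gives a single clean integral representation before splitting. The only point to tighten is the off-by-one in the tail: since $j_0-1=\lfloor C^4|\l|^{2N}\rfloor$ one has $a\ge C|\l|^N-C^{-3}|\l|^{-N}$ for $|\l|\ge 1$, so $e^{C|\l|^N-a}$ is indeed bounded and the cancellation you describe goes through.
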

\begin{proof}
Clearly 
$$
\sum _{j=1}^\infty \log (1 + Ce^{C|\l |^N - \sqrt{j}/C}) \le \int \limits _{0}^\infty  \log (1 + Ce^{C|\l |^N - \sqrt{x}/C}) \, dx =:L,
$$
and an upper bound of the product is given by $e^L$. Integrating by parts and denoting $\varphi _0 (\l ) = 2(\log C + C|\l |^{N} )$ we get
\begin{multline*}
L = C^2 \int \limits _{0}^\infty x^2 \frac{Ce^{C|\l |^N }}{e^x + Ce^{C|\l |^N }} \, dx  \le  C^2 \int \limits _{0}^{\varphi _0 (\l )} x^2 \, dx + C^2 \int \limits _{\varphi _0 (\l )}^{\infty } \frac{x^2}{e^{x/2} + 1}\, dx \\
\le C_1 + C_1|\l |^{3N}
\end{multline*}
and the result follows. 
\end{proof}
Next we estimate the determinant of the scattering matrix. The proof of the following lemma is in the spirit of Zworski's work \cite{zworski_poisson_97,zworski_even_98} 
on Schr\"{o}dinger operators; see also \cite[Proposition~6.3]{jkmm12a} and \cite[p 9]{petkovzwor99}. 

\begin{lemma}
For any $\ve , \d >0$ we have 
$$
|s(\l )| \le C e ^{|\l |^{9+3\ve }} \quad \text{for }\l  \not \in  \bigcup _{z_j\in \Rc }D(z_j, \langle z_j \rangle ^{-3-\d }).
$$
\label{jkmm13b:scattering_phase_estimate_lemma}
\end{lemma}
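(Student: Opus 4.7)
The plan is to estimate $\log|s(\lambda)|$ by exploiting the Hadamard/Weierstrass factorization \eqref{jkmm13b:weierstrass_factorization}, so that
$$
\log|s(\lambda)| = \re g(\lambda) + \log|P_{\overline{\Rc}}(\lambda)| - \log|P_\Rc(\lambda)|,
$$
and to estimate each of the three pieces separately. The first piece is immediate: since $|z_j| = |\overline{z_j}|$, the upper bound \eqref{jkmm13b:weierstrass_product_upper_bound} transfers verbatim to give $|P_{\overline{\Rc}}(\lambda)| \le C e^{C|\lambda|^4}$. What is needed is a matching lower bound on $|P_\Rc(\lambda)|$ away from its zeros, together with a polynomial-type control of $g$ issuing from the functional equation $\overline{s(\overline{\lambda})} = 1/s(\lambda)$ and the fact that $|s(\lambda)|=1$ on $\spec(\ham{D}_0) \cap \R$.

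The heart of the argument is the lower bound on $|P_\Rc(\lambda)|$ outside the excised disks $D(z_j, \langle z_j\rangle^{-3-\delta})$, which is a minimum-modulus type estimate tailored to the canonical product of genus $3$. I would split the sum $\sum_j m_j \log|E_3(\lambda/z_j)|$ into the two ranges $|z_j| > 2|\lambda|$ and $|z_j| \le 2|\lambda|$. For the far zeros one has $|\lambda/z_j| \le 1/2$, and the standard Taylor estimate $|\log E_3(w)| \le C|w|^4$ combined with a dyadic decomposition and Proposition~\ref{jkmm13b:upper_bound_resonances} yields
$$
\sum_{|z_j|>2|\lambda|} \Big|\frac{\lambda}{z_j}\Big|^4 \le C|\lambda|^4 \sum_{k\ge 1} (2^k|\lambda|)^{-4} N(2^{k+1}|\lambda|) \le C|\lambda|^{3}.
$$
For the near zeros, each of the polynomial correction terms $(\lambda/z_j)^\ell$, $\ell=1,2,3$, is summed against $dN(r) \le Cr^2\,dr$ giving contributions of order $|\lambda|^{3+\varepsilon}$ (with the worst case being $\ell=3$ producing a logarithm). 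The singular logarithm $\log|1-\lambda/z_j|$ is where the exclusion of disks enters: for $\lambda\notin D(z_j,\langle z_j\rangle^{-3-\delta})$ one has $|1-\lambda/z_j|\ge\langle z_j\rangle^{-3-\delta}/|z_j|$, hence $\log|1-\lambda/z_j| \ge -(4+\delta)\log\langle z_j\rangle$, and summing over the $O(|\lambda|^3)$ near zeros produces again at most $C|\lambda|^{3+\varepsilon}$. Altogether $|P_\Rc(\lambda)| \ge e^{-C|\lambda|^{3+\varepsilon}}$ outside the exceptional set.

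To complete the proof, I would control $|e^{g(\lambda)}|$ using the Hadamard-type strategy of Zworski \cite{zworski_poisson_97,zworski_even_98}: the combination $e^{g(\lambda)} = s(\lambda) P_\Rc(\lambda)/P_{\overline{\Rc}}(\lambda)$ is entire and, using the unitarity of $s$ on the real part of the spectrum, the functional equation relating $s(\lambda)$ and $s(\overline{\lambda})$, and a Borel--Carath\'eodory / Phragm\'en--Lindel\"of argument against the upper bound for $|s(\lambda)|$ coming from trace-class estimates on the physical sheet (cf.\ the discussion following Proposition~\ref{jkmm13b:upper_bound_resonances}), one obtains polynomial control on $\re g$. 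Substituting into the factored expression and applying Lemma~\ref{jkmm13b:infinite_product_j_estimate} to combine the several exponentials of the form $\exp(C|\lambda|^N)$ then delivers the claimed bound $|s(\lambda)| \le C\exp(|\lambda|^{9+3\varepsilon})$.

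The main obstacle I anticipate is the lower bound on $|P_\Rc(\lambda)|$: the exclusion radii $\langle z_j\rangle^{-3-\delta}$ must be chosen carefully so that the singular logarithmic contributions near the zeros of $P_\Rc$ sum to no more than $|\lambda|^{3+\varepsilon}$, while simultaneously ensuring that the excluded set is small enough (in terms of the $\delta$) to be negligible when this lemma is later invoked in the proof of the Poisson trace formula. Balancing these two requirements is what dictates the explicit exponent $9+3\varepsilon$ in the final estimate.
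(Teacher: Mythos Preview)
Your approach is circular. In the paper, the logical order is the reverse of what you propose: Lemma~\ref{jkmm13b:scattering_phase_estimate_lemma} is proved \emph{first}, directly from the representation \eqref{jkmm13b:scattering_matrix_repr} of the scattering matrix, and only \emph{afterwards} is the Weierstrass factorization invoked to deduce that $g$ is a polynomial (Lemma~\ref{jkmm13b:g_is_polynomial}). You are trying to run this backwards, using the factorization $\log|s(\l)| = \re g(\l) + \log|P_{\overline{\Rc}}(\l)| - \log|P_\Rc(\l)|$ to bound $|s(\l)|$. But the only way to control $\re g$ is through an a~priori upper bound on $|s(\l)|$ itself: a Borel--Carath\'eodory argument for $g$ needs $\re g(\l)\le C|\l|^N$ on large circles, and by the factorization this is exactly an upper bound on $|s(\l)|\cdot|P_\Rc(\l)|/|P_{\overline{\Rc}}(\l)|$, i.e.\ you must already know the lemma (or something equivalent). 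The ``trace-class estimates on the physical sheet'' you allude to do not exist in the paper in the form you need; the estimate \eqref{jkmm13b:determinant_idpluspowercutoffresolvent_estimate} concerns $\det(I-(VR_0(\l)\h)^4)$, not $s(\l)$, and the unitarity $|s(\l)|=1$ on the real spectrum gives no growth information off the real axis without a Phragm\'en--Lindel\"of input that again presupposes the bound you seek.

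The paper's actual argument bypasses the factorization entirely at this stage. It writes $s(\l)=\det(I+A(\l))$ with $A(\l)$ the scattering amplitude in \eqref{jkmm13b:scattering_matrix_repr}, bounds $|s(\l)|\le\prod_j(1+s_j(A(\l)))$, and estimates the singular values via $s_j(A(\l))\le C\|V(I+\h R_0(\l)V)^{-1}\|\,\|E_\h(\l)\|\,s_j(E_\h(\l))$. The delicate factor $\|(I+\h R_0(\l)V)^{-1}\|$ is handled by the Gohberg--Krein determinant inequality \eqref{jkmm13b:norm_determinants}, whose numerator is bounded above by \eqref{jkmm13b:determinant_idpluspowercutoffresolvent_estimate} and whose denominator is bounded below, away from its zeros, by Cartan's minimum modulus principle---this is where the excised disks and the exponent $3+\ve$ enter. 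Your lower bound on $|P_\Rc(\l)|$ is morally the same minimum-modulus estimate, but applied to the wrong object; the paper applies it to $\det(I+(\h R_0(\l)V)^4)$, which is what feeds directly into the norm bound. Combining with the $s_j(E_\h(\l))$ estimates from the proof of Proposition~\ref{jkmm13b:upper_bound_resonances} and invoking Lemma~\ref{jkmm13b:infinite_product_j_estimate} on the resulting product then gives the exponent $9+3\ve$.
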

\begin{proof}
We begin by using the estimate 
\begin{align}
|s(\l )| \le \prod _{j=1} ^\infty \big (1 +  s_j(A(\l ))  \big ),
\label{jkmm13b:scattering_phase_sing_values_scattering_amplitude}
\end{align}
where
\begin{align}
  s_j(A(\l )) \le C \| V\big (I + \h (\ham{D}_0 - \l )^{-1} V \big )^{-1} \| \|E_{\h }(\l ) \|  s_j(E_{\h }(\l ) ).
\label{jkmm13b:sing_values_scattering_amplitude_estimate}
\end{align}
From \cite[Theorem 5.1]{gohkre69} we have 
\begin{align}
\| V\big (I + \h (\ham{D}_0 - \l )^{-1} V \big )^{-1} \| \le \frac{\det \big (I + |\h (\ham{D}_0 - \l )^{-1} V|^4 \big )}{|\det \big (I + (\h (\ham{D}_0 - \l )^{-1} V \big )^4)|}.
\label{jkmm13b:norm_determinants}
\end{align}  
For the numerator it follows from \eqref{jkmm13b:determinant_idpluspowercutoffresolvent_estimate} that we have the upper bound
\begin{align}
\det \big (I + |\h (\ham{D}_0 - \l )^{-1} V|^4 \big ) \le Ce^{C| \l | ^{3}}. 
\label{jkmm13b:determinant_upper_bound}
\end{align}
Then Cartan's minimum modulus principle for entire functions gives, for any $\ve , \d > 0$,
\begin{align}
|\det \big (I + (\h (\ham{D}_0 - \l )^{-1} V)^4 \big )| \ge C e ^{-C|\l |^{3+\ve }}  \text{ for }\l \not \in \bigcup _{z_j\in \Rc }D(z_j, \langle z_j \rangle ^{-3-\d }).
\label{jkmm13b:determinant_lower_bound_away_disks}
\end{align} 
From \eqref{jkmm13b:norm_determinants}, \eqref{jkmm13b:determinant_upper_bound} and \eqref{jkmm13b:determinant_lower_bound_away_disks} it then follows that 
$$
\| V\big (I + \h (\ham{D}_0 - \l )^{-1} V \big )^{-1} \| \le C e ^{C|\l |^{3+\ve } }  \text{ for }\l \not \in \bigcup _{z_j\in \Rc }D(z_j, \langle z_j \rangle ^{-3-\d }).
$$
Combined with \eqref{jkmm13b:scattering_phase_sing_values_scattering_amplitude} and \eqref{jkmm13b:sing_values_scattering_amplitude_estimate} this results in the upper bound 
$$
|s(\l )| \le \prod (1 + C e^{C|\l |^{3+\ve }}j^{-k}) \le \prod (1 + C e^{C|\l |^{3+\ve } - \sqrt{j}/C}) \le C e ^{C|\l |^{9+3\ve }} ,
$$
where we have taken $k=[j^{1/2}] + 1$ and the last step uses Lemma \ref{jkmm13b:infinite_product_j_estimate}. 
\end{proof}
Similarly to Zworski \cite{zworski_poisson_97,zworski_even_98} we are now ready to show one of the main ingredients of the proof of Theorem~\ref{jkmm13b:thm_trace_form}:

\begin{lemma}
The entire function $g$ in \eqref{jkmm13b:weierstrass_factorization} is a polynomial of degree $\le 9$. 
\label{jkmm13b:g_is_polynomial}
\end{lemma}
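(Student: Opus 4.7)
My plan is to bound $\re g(\lambda)$ above by a fixed polynomial in $|\lambda|$ and then invoke a standard Borel--Carath\'eodory/Cauchy-inequality argument to conclude that $g$ is a polynomial of degree at most $9$. First I would rewrite \eqref{jkmm13b:weierstrass_factorization} as
\[
e^{g(\lambda)} \;=\; s(\lambda)\,\frac{P_{\Rc}(\lambda)}{P_{\overline{\Rc}}(\lambda)},
\]
and observe that the right-hand side is entire and nowhere zero: the poles of $s$ at $\Rc$ cancel the zeros of $P_{\Rc}$, and the zeros of $s$ at $\overline{\Rc}$ cancel the zeros of $P_{\overline{\Rc}}$ in the denominator. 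Hence $g$ is globally entire, and
\[
\re g(\lambda) \;=\; \log|s(\lambda)| + \log|P_{\Rc}(\lambda)| - \log|P_{\overline{\Rc}}(\lambda)|.
\]

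Next I would estimate the three terms separately. For the first, Lemma~\ref{jkmm13b:scattering_phase_estimate_lemma} already delivers $\log|s(\lambda)| \le C|\lambda|^{9+3\ve}$ outside a union of small disks around $\Rc$. For the second, \eqref{jkmm13b:weierstrass_product_upper_bound} gives $\log|P_{\Rc}(\lambda)| \le C|\lambda|^{4}$ globally. The crucial remaining input is a matching \emph{lower} bound on $|P_{\overline{\Rc}}(\lambda)|$: since $P_{\overline{\Rc}}$ is a canonical product of the same form as $P_{\Rc}$ (the conjugate resonances obey the same counting bound from Proposition~\ref{jkmm13b:upper_bound_resonances}) and hence is of order at most $4$, Cartan's minimum modulus theorem yields, for any $\delta>0$,
\[
\log|P_{\overline{\Rc}}(\lambda)| \;\ge\; -C|\lambda|^{4+\delta}
\]
off a small union of disks around $\overline{\Rc}$. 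Combining the three estimates produces $\re g(\lambda) \le C|\lambda|^{9+3\ve}$ for $\lambda$ outside a union $\Ec$ of disks around $\Rc\cup\overline{\Rc}$.

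To finish I would use that the combined radii of the disks in $\Ec$ meeting any annulus $R\le|\lambda|\le 2R$ are summable (again by Proposition~\ref{jkmm13b:upper_bound_resonances}), so a pigeonhole argument produces a sequence $R_n\to\infty$ for which the circle $|\lambda|=R_n$ avoids $\Ec$ entirely. On these circles $\re g \le CR_n^{9+3\ve}$, and Borel--Carath\'eodory upgrades this to $\max_{|\lambda|\le R_n/2}|g(\lambda)|\le CR_n^{9+3\ve}$; Cauchy's inequality then forces every Taylor coefficient $a_k$ of $g$ with $k>9+3\ve$ to vanish as $n\to\infty$. Since $\ve>0$ was arbitrary, $a_k=0$ for all $k>9$, which is exactly the claim.

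The main obstacle is the passage from the pointwise bound on $\re g$ (valid only outside the exceptional disks) to a bound along circles of arbitrarily large radius; this hinges on a quantitative form of Cartan's minimum modulus theorem together with the $O(r^{3})$ resonance counting of Proposition~\ref{jkmm13b:upper_bound_resonances}, both of which need to be arranged so that the exceptional disks do not form a barrier at infinity. Once that is in place, the subsequent Borel--Carath\'eodory step and the Liouville-type conclusion are routine.
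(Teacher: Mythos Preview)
Your plan is correct, and it reaches the same conclusion by a genuinely different route than the paper does.

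The paper avoids your third estimate entirely. Instead of isolating $e^{g}$ and having to bound $-\log|P_{\overline{\Rc}}|$ from above (i.e.\ bound $|P_{\overline{\Rc}}|$ from below via Cartan's minimum modulus theorem), the paper works with the entire function
\[
f(\l)=s(\l)P_{\Rc}(\l)=e^{g(\l)}P_{\overline{\Rc}}(\l),
\]
so that $P_{\overline{\Rc}}$ stays in the numerator. From Lemma~\ref{jkmm13b:scattering_phase_estimate_lemma} and \eqref{jkmm13b:weierstrass_product_upper_bound} one has $|f(\l)|\le Ce^{C|\l|^{9+3\ve}}$ outside the exceptional disks; since $f$ is holomorphic across those disks, the ordinary maximum principle fills them in and the bound holds globally, with no pigeonhole step needed. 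The paper then cites a Hadamard-type identity for the high derivatives of $f'/f$ in terms of the zeros of $f$ and compares it with the explicit logarithmic derivative of $P_{\overline{\Rc}}$ to read off $g^{(N+1)}\equiv 0$ for $N\ge 9$.

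What each approach buys: the paper's trick of multiplying through by $P_{\Rc}$ trades your Cartan minimum-modulus step and the radius-selection argument for a one-line maximum principle, at the cost of invoking the Hadamard factorization machinery. Your route is more self-contained (Borel--Carath\'eodory plus Cauchy estimates instead of a citation to Conway), but needs the extra lower bound on the canonical product and the circle-avoiding argument you flagged as the main obstacle. Both are standard and either would be acceptable here.
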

\begin{proof}
Applying the maximum principle to the holomorphic function $f(\l ) = s(\l ) P _\Rc  (\l ) = e^{g(\l )} P_{\overline{\Rc }}(\l )$ we get from Lemma \ref{jkmm13b:scattering_phase_estimate_lemma} and \ref{jkmm13b:weierstrass_product_upper_bound} that
$$
  |f(\l )| = |e^{g(\l )}| |P_{\overline{\Rc }}(\l )| \le C e ^{C|\l |^{9+3\ve }}. 
$$
Away from $\Rc $ we have (see \cite{conway}, Chap. XI, \S 3, Lemma 3.1) 
$$
\frac{d^N}{d\l ^N} \Big ( \frac{f'(\l )}{f(\l )} \Big ) = -N! \sum ( \overline{z_j} - \l )^{-N+1}, \quad \text{ for } N\ge 9
$$
From 
$$
\frac{f'(\l )}{f(\l )} = g'(\l ) - \frac{P'_{\overline{\Rc }}(\l  )}{P_{\overline{\Rc }}(\l )}
$$
we therefore obtain
$$
  -N! \sum (\overline{z_j} - \l )^{-N+1} = g^{(N+1)}(\l ) - \frac{d^N}{d\l ^N} \Big ( \frac{P'_{\overline{\Rc }}(\l  )}{P_{\overline{\Rc }}(\l )} \Big ).
$$
A direct calculation of the second term on the right hand side gives $g^{(N+1)} (\l ) = 0$ for $N\ge 9$. 
\end{proof}
With these preparations we are now ready to prove our main result following Zworski's strategy \cite{zworski_notes}. 

\begin{proof}[Proof of Theorem \ref{jkmm13b:thm_trace_form}]
Let $u(t) = 2 \Tr (\cos (t\ham{D}) - \cos (t\ham{D}_0))$ and $\vf \in \ccs (\R \setminus \{0\})$. Then we can write $\vf = \vf _- + \vf _+$ where $\vf _{\pm }$ are the restrictions of $\vf $ to $\R _{\pm }$. We obtain 
\begin{align*}
\langle u, \vf \rangle _{\Dc ', \Dc  } &= \sum _{\pm } \Tr (\widehat {\vf _{\pm }} (\ham {D}) + \widehat {\vf _{\pm }} (\ham {-D}) - \widehat {\vf _{\pm }} (\ham {D}_0) - \widehat {\vf _{\pm }} (-\ham {D}_0)) \\ 
&= \sum _{\pm } \Tr (f_{\pm} (\ham {D}) - f_{\pm }(\ham{D}_0)),
\end{align*}
where we have defined $f_{\pm } (\l )=  \widehat{\vf _{\pm }} (\l ) + \widehat{\vf  _{\pm }} (- \l )$. Using \eqref{jkmm13b:lifshits_krein_trace_formula_scattering_phase} we obtain
\begin{align}
\langle u, \vf \rangle _{\Dc ', \Dc  } = -\frac{1}{2\pi i}\sum _{\pm } \Big ( \int \limits _{\R} \widehat {\vf _{\pm}}( \pm \l ) \partial _{\l } (\log s(\l ) ) \, d\l + \sum _{\l _j \in \spec _{\textrm{d}}(\ham{D})}  f_{\pm }(\l _j) \Big )
\label{jkmm13b:u_acting_phi}
\end{align}
with all four sign combinations in the integral.
Now define $h_{\pm } (\l ) = \Fr [\vf _{\pm } /t^{9}](\l )$ so that $\partial ^{9}_{\l }(h_{\pm })(\l ) = \widehat {\vf _{\pm }} (\l )$, integrate by parts and use the factorization \eqref{jkmm13b:weierstrass_factorization} to obtain 
\begin{align*}
&-\frac{1}{2\pi i}\int \limits _{\R} \widehat {\vf _{+}}( \l ) \partial _{\l } (\log s(\l ) ) \, d\l  = -\frac{1}{2\pi i} \int \limits _{\R} h_{+ }( \l ) \partial _{\l } ^{10} (\log s(\l )) \, d\l \nonumber \\
&= -\frac{1}{2\pi i} \sum _{\l _j \in \Rc } m_j \int \limits _{\R} h_{+ }( \l ) \Big (\frac{9!}{(\l - \overline{\l _j})^{10}} -  \frac{9!}{(\l - \l _j )^{10}} \Big )\, d\l \\
&=  \frac{9!}{2\pi i} \sum _{\l_j \in \Rc \cap \C _{+}} m_j \int \limits _{|\l - \overline{\l _j}|\ll 1} \frac{h_+(\l )}{(\l - \overline{\l _j})^{10}} \, d\l 
 \\
&\phantom{ooooooooooooooooooooooooooooooo}-\frac{9!}{2\pi i} \sum _{\l_j \in \Rc \cap \C _{-}} m_j \int \limits _{|\l - \l _j|\ll 1} \frac{h_+(\l )}{(\l - \l _j)^{10}} \, d\l \\
&= \sum _{\l _j \in  \Rc \cap \C _{+ }} m_j \widehat{\vf _+}(\overline{\l _j}) 
- \sum _{\l _j \in  \Rc \cap \C _{- }} m_j \widehat{\vf _+}(\l _j) ,
\end{align*}
where we have used the fact that $|h_{+ }(\l )| = \Oc (\langle \l \rangle ^{-\infty })$ for $ \im \l \le 0$ to deform the integral over $\R $ into the lower half-plane. We treat the remaining terms in  \eqref{jkmm13b:u_acting_phi} similarly to obtain 
\begin{align*}
&\langle u, \vf \rangle _{\Dc ', \Dc  } = \sum _{\l _j \in  \Rc \cap \C _{+}} m_j \widehat{\vf _+}( \overline{\l _j}) - \sum _{\l _j \in  \Rc \cap \C _{-}} m_j\widehat{\vf _+}(\l _j) \\
&\phantom{ooooooooooooo}-\sum _{\l _j \in  \Rc \cap \C _{-}} m_j \widehat{\vf _+}( -\overline{\l _j}) + \sum _{\l _j \in  \Rc \cap \C _{+}} m_j\widehat{\vf _+}(-\l _j) \\
&\phantom{ooooooooooooo}-\sum _{\l _j \in  \Rc \cap \C _{-}} m_j \widehat{\vf _-}( \overline{\l _j}) + \sum _{\l _j \in  \Rc \cap \C _{+}} m_j\widehat{\vf _-}(\l _j) \\
&\phantom{ooooooooooooo}+\sum _{\l _j \in  \Rc \cap \C _{+}} m_j \widehat{\vf _-}( -\overline{\l _j}) - \sum _{\l _j \in  \Rc \cap \C _{-}} m_j\widehat{\vf _-}(-\l _j) \\
&\phantom{ooooooooooooo}+ \sum _{\l _j \in \spec _{\textrm{d}} (\ham{D})} (f_{- }(\l _j) +  f_{+}(\l _j))\\
&= \langle \vf , \sum _{\l _j \in  \Rc \cap \C _{+}} m_j(e^{-i|t|\overline{\l _j}} + e^{i|t|\l _j}) - \sum _{\l _j \in  \Rc \cap \C _{-}} (e^{-i|t|\l _j} + e^{i|t|\overline{\l _j}}) \\
&\phantom{ooooooooooooooooooooooooooooooooooooooooo}+  \sum _{\l _j \in \spec _{\textrm{d}}(\ham{D})} 2 m_j \cos (t\l _j)  \rangle .
\end{align*}
which proves \eqref{jkmm13b:thm_trace_form}.
\end{proof}

\appendix

\section{Scattering theory}
It is well-known (see e.g. \cite{thaller} or \cite{yafaev_dirac}) that under the assumption that $V\in \ccs (\R ^3)$ the wave operators
$$
W_\pm = \slim _{t\to \pm \infty } e^{it\ham{D}} e^{-it\ham{D}_0}
$$
exist, are asymptotically complete and fulfill the intertwining relation $\ham{D} W_{\pm } = W_{\pm } \ham{D}_0$. The scattering operator $\ham{S} = W_+^\ast W_-$ for the pair $(\ham{D}, \ham{D}_0)$ is unitary on $L ^2(\R ^3)$ and commutes with $\ham{D}_0$ and consequently it can be represented as multiplication by the so called scattering matrix $S(\l )$.

To obtain such stationary representations of $S(\l )$ we need to discuss spectral representations of the Dirac operator. To this end we introduce $E_0(\l ):L^2 (\R ^3 ; \C ^4) \to L^2 (S^2 ; \C ^4)$ by
\begin{align}
	(E_0(\l ) f) (\o ) = (2\pi )^{-3/2} (\l ^2 (\l ^2 - 1))^{1/4} \Pi _\pm  (\k (\l )\o ) \int \limits _{\R ^3} e^{-i\k (\l ) \o \cdot x} f(x) \, dx,	
\label{jkmm13b:free_trace_E}
\end{align}
for $\pm \l >1$, with $\Pi _{\pm }$ as in \eqref{jkmm13b:projections_eigenspaces_symbol} and $\k (\l ) = \sqrt{\l ^2 - 1}$. The adjoint operator $E_0 (\l )^\ast: L^2 (S^2 ; \C ^4) \to L^2 (\R ^3 ; \C ^4)$ is then given by 
$$
	(E_0(\l )^\ast  f) (x) = (2\pi )^{-3/2} (\l ^2 (\l ^2 - 1))^{1/4}  \int \limits _{S^2} e^{i\k (\l ) \o \cdot x} \Pi _\pm  (\k (\l )\o ) f(\o ) \, d	\o.
$$
Then 
\begin{align*}
	&[E(\l ) \ham{D}_0 f](\o ) = (\l ^2 (\l ^2 - 1))^{1/4}\Pi _\pm  (\k (\l )\o ) \Fr  (\ham{D}_0 f) (\k (\l )\o ) \\
	&= (\l ^2 (\l ^2 - 1))^{1/4}\Pi _\pm  (\k (\l )\o ) \mbf{d}_0 (\k (\l )\o ) \Fr (f) (\k (\l )\o ) \\
	&= (\l ^2 (\l ^2 - 1))^{1/4} \l _{\pm }(\k (\l )\o ) \Pi _\pm  (\k (\l )\o ) \Fr (f) (\k (\l )\o ) \\
	&= \l [E(\l )f](\o ),
\end{align*}
which is to say that $E(\l ) \ham{D}_0 E(\l )^{-1} = \l $ where the right-hand side denotes multiplication by $\l $.

A representation of the scattering matrix is shown in \cite{balslev_helffer} but they use a different spectral representation that we now briefly discuss. 
We begin by introducing the so called Foldy-Wouthuysen (F-W) transform which diagonalizes $\ham{D}_0$ as in \cite{bjorken_drell} (see also \cite{balslev_helffer} and \cite{thaller}). In the $\x$-representation it is given by the unitary $4\times 4$ matrix defined by $\hat{G}(\x ) = \exp (\b (\mbf{\a } \cdot \xi ) \th (|\x |))$ where $\th (t) = (2t)^{-1}\arctan t$ for $t>0$. A direct calculation gives
\begin{align}
\hat{G}(\x ) \mbf{d}_0(\x ) \hat{G}(\x )^{-1} = (\x ^2 + 1)^{1/2} \b . 
\label{jkmm13b:conjugation_G}
\end{align}  
We then define the F-W transform as the unitary operator $G$ on $L^2(\R ^3; \C ^4)$ defined by $G=\Fr ^{-1}\hat{G}(\x ) \Fr $ and it transforms $\ham{D}_0$ into 
$$
\tilde{\ham{D}}_0 := G\ham{D}_0 G^{-1} = (-\D + 1)^{1/2}\b . 
$$ 
We now define the restrictions of the so called free trace operator (see \cite{balslev_helffer}) $T_{0}^{\pm }(\l ) : L^2 (\R^3 ;\C ^4) \to L^2(S^2 ; \C ^4)$ by 
$$
	(T_{0}^{\pm }(\l )f)(\o ) = (2\pi )^{-3/2}(\l ^2 (\l ^2 - 1))^{1/4} \int \limits _{\R ^3} e^{-i\k (\l )\o \cdot x} P_{\pm }G f(x) \, dx
$$
where $P_{\pm } = 2^{-1}(I_4 \pm \b )$ and take the free trace operator to be $T_{0}(\l ) = T_{0}^{\pm } (\l)$ depending on whether $\pm \l > 1$. Similarly to above it is easy to see that $T_{0}(\l )^{-1} \ham{D}_0 T_{0}(\l )$ is also multiplication by $\l $. 
 
In \cite{balslev_helffer} it is shown that the scattering matrix has the stationary representation 
\begin{align}
	\tilde{S}(\l ) = I - 2\pi i T_0(\l ) (V - V R(\l + i0) V ) T_0 (\l ) \quad \text{for }|\l | >1. 
\label{jkmm13b:scattering_matrix_free_trace}
\end{align}
We can relate this representation to the one given by $E(\l )$ in \eqref{jkmm13b:free_trace_E} by noting that by \eqref{jkmm13b:conjugation_G}
\begin{align*}
	&[\hat{G} (\k (\l ) \o ) E_0(\l ) f ](x) \\
	&=(\l ^2 (\l ^2 - 1))^{1/4} \hat{G}(\k (\l ) \o ) \tfrac{1}{2}\big (I_4 + \l ^{-1} \mbf{d}_0 (\k (\l ) \o ) \big ) \Fr (f) (\k (\l ) \o ) \\
	&=(\l ^2 (\l ^2 - 1))^{1/4}P_{\pm } \hat{G}(\k (\l ) \o ) \Fr (f) (\k (\l ) \o ) \\
	&=(\l ^2 (\l ^2 - 1))^{1/4} P_{\pm } (\Fr G f)(\k (\l ) \o ) \\
	&= [T_0f] (\o ).
\end{align*}
This together with \eqref{jkmm13b:scattering_matrix_free_trace} results in the representation
$$
	S(\l ) := \hat{G}(\k (\l ) \o ) ^{-1} \widetilde{S}(\l ) \hat{G}(\k (\l ) \o ) = I - 2\pi i E_0(\l ) (V - V R(\l + i0) V ) E_0 (\l )^{\ast } 
$$ 
for $|\l | > 1$. 
The scattering matrix is unitary for $|\l |>1$ with 
$$
	S(\l )^{-1} = I + 2\pi i E_0(\l ) (V - V R(\l - i0) V ) E_0 (\l )^{\ast }. 
$$
By taking $\h \in \ccs (B(0,R_0))$ with $\h V = V$ we use the identity 
$$
	V(I - (\ham {D} - \l )^{-1}V) = V\big (I + \h (\ham{D}_0 - \l )^{-1}V \big )^{-1}
$$
to rewrite the extended scattering matrix as
\begin{align}
	S(\l ) = I -2\pi i E_\h (\l ) V\big (I + \h (\ham{D}_0 - \l )^{-1}V \big )^{-1} E_\h (\l ) ^\ast .
\label{jkmm13b:scattering_matrix_repr}
\end{align}
where
\begin{align}
	(E_\h (\l ) f) (\o ) = (2\pi )^{-3/2} (\l ^2 (\l ^2 - 1))^{1/4} \Pi _\pm  (\k (\l )\o ) \int \limits _{\R ^3} e^{-i\k (\l ) \o \cdot x} \h (x)f(x) \, 	dx.
	\label{jkmm13b:def_E_chi}
\end{align}
From \eqref{jkmm13b:cutoff_resolvent_rewrite} we see that the resonances will appear as poles of the extended $S(\l )$.
It also follows that resonances appear as poles of the scattering determinant
$$
s(\l ) = \det (S(\l )) = \det (I + A(\l )). 
$$
\subsubsection{The Lifshits-Krein trace formula}
The so called spectral shift function $\x \in \Dc ' (\R )$ is a generalization of the eigenvalue counting function that makes sense also on the absolutely continuous spectrum $(-\infty , -1]\cup [1,\infty )$ where it is smooth. It is well-known (see \cite{yafaev_dirac}) that the Lifshits-Krein trace formula 
$$
\tr (f(\ham{D}) - f(\ham{D}_0)) = \int \limits _{\R } f(\l) \x ' (\l ) \, d \l , 
$$
holds for any $f\in \Sc (\R )$. Also, by the Birman-Krein formula we have 
$$
s(\l ) = e^{-2\pi i \x (\l )}
$$
for almost every $\l \in (-\infty , -1]\cup [1,\infty )$. Therefore we can choose a branch of the logarithm such that 
$$
\x (\l ) = -\frac{1}{2\pi i } \log s(\l ), \quad \text{for a.e. }\pm \l >1,
$$  
and obtain
\begin{align}
\tr (f(\ham{D}) - f(\ham{D}_0)) = -\frac{1}{2\pi i} \int \limits _{\R } f(\l )\partial _\l \big ( \log s(\l )\big ) \, d\l  + \sum _{\l _j \in \spec _{\textrm{d}} (\ham{D})} m_j f(\l _j),
\label{jkmm13b:lifshits_krein_trace_formula_scattering_phase}
\end{align}
under the assumption that $\pm 1 \not \in \Rc $.


\section{Resolvent of free Dirac operator}
From the identity $\ham{D}_0^2 = -\D + 1$ it follows that 
\begin{align}
R_0(\l ) = (\ham{D}_0 + \l ) R_{00}(\sqrt{\l ^2  -1}).
\label{jkmm13b:dirac_resolvent_laplace_resolvent}
\end{align}
where $R_{00}(z)= (-\D - z^2)^{-1}$. It is well-known that $R_{00}(z)$ is a convolution operator (see e.g. \cite{sjostrand01b}, \cite{melrose_geometric_95} and \cite{zworski_notes}) and that its kernel is given by 
$(4\pi )^{-1}|x|^{-1}e^{iz|x|}$ where $\im z> 0$. Consequently, for $\l \in \C \setminus \spec(\ham{D}_0)$, we have
\begin{align}
  [(\ham{D}_0 - \l )^{-1} u ](x) = (\mbf{\a } \cdot \nabla  + \beta + \l ) \frac{1}{4\pi }\int \limits _{\R ^3} \frac{e^{i\sqrt{\l ^2 - 1}|x-y|}}{|x-y|}u(y)\,dy, 
\label{jkmm13b:distribution_kernel}
\end{align}  
for $u\in L^2(\R ^3 ; \C ^4)$ on the branch where $\im (\sqrt{\l ^2 - 1})>0$. It is not difficult to show that the resolvent kernel of $\ham{D}_0$ on 
$\ccs(\R ^3 ; \C ^4)$ is given by   
\begin{align}
R_0(\l ,x) = \Big (i \frac{\mbf{\a }\cdot x}{|x|^2} + \sqrt{\l ^2 - 1}\frac{\mbf{\a }\cdot x}{|x|} + \b + \l \Big )\frac{e^{i\sqrt{\l^2 - 1}x}}{4\pi |x|}. 
\label{jkmm13b:distribution_kernel_free_resolvent}
\end{align}

\noindent
It is also well-known (see e.g. \cite{sjostrand01b} and \cite{zworski_notes}) that for $\h \in \ccs (\R ^3)$ the cut-off resolvent $\h R_{00}(z)\h $ can be extended 
holomorphically to all of $\C $ and that it admits the following upper bounds:
\begin{align}
\| \h R_{00} (z)\h \|_{L^2 \to H^j} \le  C (|z|^{j-1}e^{C(\im z)_-}), \qquad \text{for }j=0,1,2
\label{jkmm13b:laplace_resolvent_bound_sobolev}
\end{align}
where the constant $C$ depends only on the support of $\h $. 

If temporarily we denote the extended resolvent by $\widetilde{R}_{00}(z)$ it can be related to the standard resolvent by
$$
	\h R_{00}(z) \h - \h \widetilde{R}_{00}(-z) \h = T_{\h } (z), \quad \im z > 0 ,
$$
where $T(z)$ is the convolution operator with kernel 
$$
	T(x,y,z) = \h (x) \frac{i}{2} \frac{z}{(2\pi )^2} \int \limits _{S^2} e^{iz \o \cdot (x-y)} \,d \o \h (y) .
$$
It follows that if $\h \widetilde{R}_0(\l ) \h $ denotes the resolvent extended to $\im \k (\l ) < 0$ we have 
\begin{align*}
	&[(\h R_0(\l ) \h - \h \widetilde{R}_0(\l ) \h)f](x) \nonumber \\
	&= \h (x)[(\ham{D}_0 + \l ) \Big ( R_{00}(\k (\l ) ) - \widetilde{R}_{00}(-\k (\l )) \Big)	(\h f)] (x) \nonumber
	\\
	&= \frac{i}{2}\frac{\k (\l )}{(2\pi )^2}\h (x) (\ham{D}_0 + \l ) \int \limits _{S^2} e^{i\k (\l )\o \cdot x} \int \limits _{\R ^3} e^{-i\k (\l )\o 	\cdot y} \h (y) f(y) \, dy \nonumber 
\end{align*}
\begin{align}
	&= \frac{i}{2}\frac{\k (\l )}{(2\pi )^2}\h (x) \int \limits _{S^2} (\mbf{d}_0 (\k (\l )\o ) + \l )e^{i\k (\l )\o \cdot x } \int \limits _{\R ^3} e^	{-i\k (\l )\o 	\cdot y} \h (y) f(y) \, dy \nonumber \\
	&=\frac{i \l \k (\l )}{(2\pi )^2} \h (x) \int \limits _{S^2} \frac{1}{2} \Big ( I_4 \pm \l ^{-1} \mbf{d}_0(\k (\l ) \o ) \Big ) 
	e^{i\k (\l )\o \cdot x } \int \limits _{\R ^3} e^	{-i\k (\l )\o 	\cdot y} \h (y) f(y) \, dy \nonumber \\
	&=[2\pi i E_\h (\overline{\l }) ^\ast E_{\h } (\l )f](x).
	\label{jkmm13b:extended_resolvent_relation}
\end{align}
%


%
%

\end{document}